\documentclass{amsart}
\usepackage{amsmath}
\usepackage{amssymb}
\usepackage{amsfonts}

\setcounter{MaxMatrixCols}{10}

\newtheorem{theorem}{Theorem}
\theoremstyle{plain}

\newtheorem{definition}{Definition}

\newtheorem{lemma}{Lemma}

\newtheorem{remark}{Remark}

\numberwithin{equation}{section}
\input{tcilatex}

\begin{document}
\title[On some inequalities]{On some inequalities for $s$-logarithmically
convex functions in the second sense via fractional integrals}
\author{Havva KAVURMACI}
\address{A\u{g}r\i\ \.{I}brahim \c{C}e\c{c}en University, Faculty of Science
and Arts, Department of Mathematics, A\u{g}r\i , Turkey.}
\email{hkavurmaci@agri.edu.tr}
\author{Mevl\"{u}t TUN\c{C}}
\address{Kilis 7 Aral\i k University, Faculty of Science and Arts,
Department of Mathematics, Kilis, 79000, Turkey.}
\email{mevluttunc@kilis.edu.tr}
\subjclass[2000]{26D10, 26A15, 26A16, 26A51}
\keywords{Hadamard's inequality, $s$-geometrically convex functions . }

\begin{abstract}
In this paper, we establish some new Hadamard type inequalities for $s$%
-logarithmically convex functions in the second sense via fractional
integrals by using Lemma 1 which has been proved by Sar\i kaya et al. in the
paper \cite{zeki2}.
\end{abstract}

\maketitle

\section{\protect\bigskip Introduction}

The following result is well known in the literature as Hadamard's
inequality \cite{hdm}.

\begin{theorem}
\bigskip Let $f:I\subset 
\mathbb{R}
\rightarrow 
\mathbb{R}
$ be a convex function on the interval $I$ of real numbers and $a,b\in I$
with $a<b.$ Then%
\begin{equation}
f\left( \frac{a+b}{2}\right) \leq \frac{1}{b-a}\int_{a}^{b}f\left( x\right)
dx\leq \frac{f\left( a\right) +f\left( b\right) }{2}  \label{101}
\end{equation}
\end{theorem}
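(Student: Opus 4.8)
The plan is to prove the two inequalities in \eqref{101} separately, both resting on the defining property of convexity: for all $u,v\in I$ and $t\in[0,1]$ one has $f((1-t)u+tv)\le (1-t)f(u)+tf(v)$. The common device is the affine substitution $x=(1-t)a+tb$, which carries $[0,1]$ onto $[a,b]$ with $dx=(b-a)\,dt$ and rewrites the mean value as
\[
\frac{1}{b-a}\int_a^b f(x)\,dx=\int_0^1 f\bigl((1-t)a+tb\bigr)\,dt.
\]

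For the right-hand inequality I would insert the convexity estimate directly under the integral sign. Since $f((1-t)a+tb)\le (1-t)f(a)+tf(b)$ holds pointwise in $t$, integrating over $[0,1]$ and using $\int_0^1(1-t)\,dt=\int_0^1 t\,dt=\tfrac12$ produces precisely $\tfrac{f(a)+f(b)}{2}$, which is the claimed upper bound.

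For the left-hand inequality I would exploit the invariance of the integral under the reflection $t\mapsto 1-t$. Averaging the transformed integral with its reflected copy gives
\[
\int_0^1 f\bigl((1-t)a+tb\bigr)\,dt=\frac12\int_0^1\Bigl[f\bigl((1-t)a+tb\bigr)+f\bigl(ta+(1-t)b\bigr)\Bigr]\,dt,
\]
and then I would apply midpoint convexity to the bracketed sum. Because the two arguments average to $\tfrac{a+b}{2}$ for every $t$, convexity yields $\tfrac12[f((1-t)a+tb)+f(ta+(1-t)b)]\ge f(\tfrac{a+b}{2})$; integrating this constant lower bound over $[0,1]$ delivers $f(\tfrac{a+b}{2})$, the desired left-hand member.

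The step demanding the most care is the left-hand inequality, since it hinges on recognizing the symmetrization that pairs each point with its mirror image about the midpoint. By contrast, the right-hand bound follows almost immediately from the termwise application of convexity, so the only genuine obstacle is setting up the reflection correctly and checking that the averaged arguments collapse to the midpoint independently of $t$.
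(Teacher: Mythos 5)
Your proof is correct, but note that the paper itself offers no proof of this statement: it is quoted in the introduction as Hadamard's classical inequality with a reference to the literature, so there is no internal argument to compare against. Your two-step argument is one of the standard proofs and is complete: the substitution $x=(1-t)a+tb$ correctly normalizes the mean value to $\int_0^1 f((1-t)a+tb)\,dt$; the right-hand bound follows by integrating the convexity inequality termwise; and for the left-hand bound your symmetrization is sound, since the arguments $(1-t)a+tb$ and $ta+(1-t)b$ sum to $a+b$ for every $t$, so convexity at parameter $\tfrac12$ gives the pointwise lower bound $f\left(\tfrac{a+b}{2}\right)$, which integrates to the desired constant. An equally common alternative for the left-hand inequality, worth knowing, avoids symmetrization: take a supporting line $\ell(x)=f\left(\tfrac{a+b}{2}\right)+m\left(x-\tfrac{a+b}{2}\right)$ at the midpoint, so that $\ell\le f$ on $[a,b]$, and integrate; the linear term averages to zero. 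The only point your write-up glosses over is integrability: a convex function on $[a,b]$ is continuous on the open interval and bounded there by its chord and supporting lines, so the integral exists, but a careful version of the proof should say this.
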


The following definitions is well known in the literature:

\begin{definition}
\bigskip A function $f:I\rightarrow 
\mathbb{R}
,$ $\emptyset \neq I\subseteq 
\mathbb{R}
,$ where $I$ is a convex set, is said to be convex on $I$ if inequality%
\begin{equation*}
f\left( tx+\left( 1-t\right) y\right) \leq tf\left( x\right) +\left(
1-t\right) f\left( y\right)
\end{equation*}%
holds for all $x,y\in I$ and $t\in \left[ 0,1\right] $.
\end{definition}

\bigskip In \cite{at}, Akdemir and Tun\c{c} were introduced the class of $s$%
-logarithmically convex functions in the first and second sense as the
following:

\begin{definition}
\label{mm}\textit{A function }$f:I\subset 
\mathbb{R}
_{0}\rightarrow 
\mathbb{R}
_{+}$\textit{\ is said to be }$s$-logarithmically convex in the first sense%
\textit{\ if \ \ \ \ \ \ \ \ \ \ \ \ }%
\begin{equation}
f\left( \alpha x+\beta y\right) \leq \left[ f\left( x\right) \right]
^{\alpha ^{s}}\left[ f\left( y\right) \right] ^{\beta ^{s}}  \label{m1}
\end{equation}%
for some $s\in \left( 0,1\right] $, where $x,y\in I$\textit{\ and }$\alpha
^{s}+\beta ^{s}=1.$
\end{definition}

\begin{definition}
\label{mmm}\textit{A function }$f:I\subset 
\mathbb{R}
_{0}\rightarrow 
\mathbb{R}
_{+}$\textit{\ is said to be }$s$-logarithmically convex in the second sense%
\textit{\ if \ \ \ \ \ \ \ \ \ \ \ \ }%
\begin{equation}
f\left( tx+\left( 1-t\right) y\right) \leq \left[ f\left( x\right) \right]
^{t^{s}}\left[ f\left( y\right) \right] ^{\left( 1-t\right) ^{s}}  \label{m2}
\end{equation}%
for some $s\in \left( 0,1\right] $, where $x,y\in I$\textit{\ and }$t\in %
\left[ 0,1\right] $\textit{.}
\end{definition}

\bigskip Clearly, when taking $s=1$ in Definition \ref{mm} or Definition \ref%
{mmm}, then $f$ becomes the standard logarithmically convex function on $I$.

\begin{definition}
Let $f\in L_{1}[a,b].$ The Riemann-Liouville integrals $J_{a^{+}}^{\alpha }f$
and $J_{b^{-}}^{\alpha }f$ of order $\alpha >0$ with $a\geq 0$ are defined by%
\begin{equation*}
J_{a^{+}}^{\alpha }f\left( x\right) =\frac{1}{\Gamma (\alpha )}\underset{a}{%
\overset{x}{\int }}\left( x-t\right) ^{\alpha -1}f(t)dt,\text{ \ }x>a
\end{equation*}%
and%
\begin{equation*}
J_{b^{-}}^{\alpha }f\left( x\right) =\frac{1}{\Gamma (\alpha )}\underset{x}{%
\overset{b}{\int }}\left( t-x\right) ^{\alpha -1}f(t)dt,\text{ \ }x<b
\end{equation*}%
respectively where $\Gamma (\alpha )=\underset{0}{\overset{\infty }{\int }}%
e^{-u}u^{\alpha -1}du.$ Here is $J_{a^{+}}^{0}f(x)=J_{b^{-}}^{0}f(x)=f(x).$
\end{definition}

In the case of $\alpha =1$, the fractional integral reduces to the classical
integral. For some recent results connected with \ fractional integral
inequalities see \cite{zeki2}- \cite{ody}.

In \cite{zeki2}, Sar\i kaya \textit{et. al.} proved the following results
for fractional integrals.

\begin{lemma}
\label{l}Let $f:\left[ a,b\right] \rightarrow 
\mathbb{R}
$ be a differentiable mapping on $\left( a,b\right) $ with $a<b.$ If $%
f^{\prime }\in L\left[ a,b\right] ,$ then the following equality for
fractional integrals holds:%
\begin{eqnarray*}
&&\frac{f\left( a\right) +f\left( b\right) }{2}-\frac{\Gamma \left( \alpha
+1\right) }{2\left( b-a\right) ^{\alpha }}\left[ J_{b^{-}}^{\alpha }f\left(
a\right) +J_{a^{+}}^{\alpha }f\left( b\right) \right] \\
&=&\frac{b-a}{2}\int_{0}^{1}\left[ \left( 1-t\right) ^{\alpha }-t^{\alpha }%
\right] f^{\prime }\left( ta+\left( 1-t\right) b\right) dt.
\end{eqnarray*}
\end{lemma}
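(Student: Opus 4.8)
The plan is to verify the identity by starting from the right-hand side and reducing it to the left-hand side. First I would write
$$I=\int_{0}^{1}\left[(1-t)^{\alpha}-t^{\alpha}\right]f'\left(ta+(1-t)b\right)dt=I_{1}-I_{2},$$
where $I_{1}=\int_{0}^{1}(1-t)^{\alpha}f'\left(ta+(1-t)b\right)dt$ and $I_{2}=\int_{0}^{1}t^{\alpha}f'\left(ta+(1-t)b\right)dt$, and treat each piece by integration by parts. The key observation is that $\frac{d}{dt}f\left(ta+(1-t)b\right)=(a-b)f'\left(ta+(1-t)b\right)$, so $\frac{1}{a-b}f\left(ta+(1-t)b\right)$ serves as the antiderivative needed in each integration by parts. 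This step is legitimate because $f'\in L[a,b]$.

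For $I_{1}$, taking $u=(1-t)^{\alpha}$ and $dv=f'\left(ta+(1-t)b\right)dt$, the boundary term survives only at $t=0$ (since $(1-t)^{\alpha}$ vanishes at $t=1$ for $\alpha>0$) and contributes $\frac{f(b)}{b-a}$, while the leftover is $\frac{\alpha}{a-b}\int_{0}^{1}(1-t)^{\alpha-1}f\left(ta+(1-t)b\right)dt$. Similarly, for $I_{2}$ with $u=t^{\alpha}$, only the $t=1$ boundary term survives and gives $-\frac{f(a)}{b-a}$, leaving $-\frac{\alpha}{a-b}\int_{0}^{1}t^{\alpha-1}f\left(ta+(1-t)b\right)dt$.

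Next I would perform the substitution $x=ta+(1-t)b$, under which $t=\frac{b-x}{b-a}$, $1-t=\frac{x-a}{b-a}$, and $dt=-\frac{dx}{b-a}$, with $t=0,1$ mapping to $x=b,a$. This turns the leftover integral in $I_{1}$ into a constant multiple of $\int_{a}^{b}(x-a)^{\alpha-1}f(x)\,dx=\Gamma(\alpha)J_{b^{-}}^{\alpha}f(a)$, and the one in $I_{2}$ into a multiple of $\int_{a}^{b}(b-x)^{\alpha-1}f(x)\,dx=\Gamma(\alpha)J_{a^{+}}^{\alpha}f(b)$, directly by the definition of the Riemann-Liouville integrals. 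Using $\alpha\Gamma(\alpha)=\Gamma(\alpha+1)$ and collecting the pieces, I obtain
$$I=\frac{f(a)+f(b)}{b-a}-\frac{\Gamma(\alpha+1)}{(b-a)^{\alpha+1}}\left[J_{b^{-}}^{\alpha}f(a)+J_{a^{+}}^{\alpha}f(b)\right],$$
and multiplying through by $\frac{b-a}{2}$ yields precisely the claimed equality.

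The main obstacle is purely bookkeeping rather than analytic: the substitution reverses the orientation of the interval (so $x=b$ corresponds to $t=0$), and the factor $a-b=-(b-a)$ recurs at every stage, so the signs and the pairing of $(x-a)^{\alpha-1}$ with $J_{b^{-}}^{\alpha}$ versus $(b-x)^{\alpha-1}$ with $J_{a^{+}}^{\alpha}$ must be tracked with care. Beyond the integration by parts, no further subtlety is involved.
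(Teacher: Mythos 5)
Your proof is correct: the decomposition into $I_{1}$ and $I_{2}$, the integration by parts with antiderivative $\frac{1}{a-b}f\left( ta+\left( 1-t\right) b\right) $, the vanishing of the boundary terms at $t=1$ (for $I_{1}$) and $t=0$ (for $I_{2}$) when $\alpha >0$, and the identification via $x=ta+\left( 1-t\right) b$ of the leftover integrals with $\Gamma \left( \alpha \right) J_{b^{-}}^{\alpha }f\left( a\right) $ and $\Gamma \left( \alpha \right) J_{a^{+}}^{\alpha }f\left( b\right) $ all check out, and multiplying by $\frac{b-a}{2}$ gives exactly the stated identity. Note that this paper states Lemma \ref{l} without proof, citing \cite{zeki2}; your argument is essentially the same integration-by-parts computation given there by Sar\i kaya et al., so there is no methodological divergence to report.
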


\begin{theorem}
\bigskip Let $f:\left[ a,b\right] \rightarrow 
\mathbb{R}
$ be a differentiable mapping on $\left( a,b\right) $ with $a<b.$ If $%
\left\vert f^{\prime }\right\vert $ is convex on $\left[ a,b\right] ,\ $then
the following inequality for fractional integrals holds:%
\begin{eqnarray*}
&&\left\vert \frac{f\left( a\right) +f\left( b\right) }{2}-\frac{\Gamma
\left( \alpha +1\right) }{2\left( b-a\right) ^{\alpha }}\left[
J_{b^{-}}^{\alpha }f\left( a\right) +J_{a^{+}}^{\alpha }f\left( b\right) %
\right] \right\vert \\
&=&\frac{b-a}{2\left( \alpha +1\right) }\left( 1-\frac{1}{2^{\alpha }}%
\right) \left[ \left\vert f^{\prime }\left( a\right) \right\vert +\left\vert
f^{\prime }\left( b\right) \right\vert \right] .
\end{eqnarray*}
\end{theorem}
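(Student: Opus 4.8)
The plan is to start from the integral identity established in Lemma \ref{l} and estimate its right-hand side. First I would take absolute values on both sides of that identity and apply the triangle inequality for integrals, obtaining
\begin{equation*}
\left\vert \frac{f(a)+f(b)}{2}-\frac{\Gamma (\alpha +1)}{2(b-a)^{\alpha }}\left[ J_{b^{-}}^{\alpha }f(a)+J_{a^{+}}^{\alpha }f(b)\right] \right\vert \leq \frac{b-a}{2}\int_{0}^{1}\left\vert (1-t)^{\alpha }-t^{\alpha }\right\vert \left\vert f^{\prime }(ta+(1-t)b)\right\vert dt.
\end{equation*}
Next I would invoke the convexity of $\left\vert f^{\prime }\right\vert $ in the form $\left\vert f^{\prime }(ta+(1-t)b)\right\vert \leq t\left\vert f^{\prime }(a)\right\vert +(1-t)\left\vert f^{\prime }(b)\right\vert $, which bounds the integrand from above and reduces the problem to evaluating the two weighted integrals $A=\int_{0}^{1}\left\vert (1-t)^{\alpha }-t^{\alpha }\right\vert \,t\,dt$ and $B=\int_{0}^{1}\left\vert (1-t)^{\alpha }-t^{\alpha }\right\vert (1-t)\,dt$.

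The key observation, and the step I expect to require the most care, is that the kernel $\left\vert (1-t)^{\alpha }-t^{\alpha }\right\vert $ is symmetric about $t=1/2$: under the substitution $t\mapsto 1-t$ the kernel is unchanged, while the linear weights $t$ and $1-t$ are interchanged. Hence $A=B$, and it suffices to compute the single integral $A=\tfrac{1}{2}\int_{0}^{1}\left\vert (1-t)^{\alpha }-t^{\alpha }\right\vert \,dt$. To remove the absolute value I would split the interval at $t=1/2$, using that $(1-t)^{\alpha }\geq t^{\alpha }$ on $[0,1/2]$ and $(1-t)^{\alpha }\leq t^{\alpha }$ on $[1/2,1]$; the two resulting pieces coincide by the same symmetry, so that $A=\int_{0}^{1/2}\left[ (1-t)^{\alpha }-t^{\alpha }\right] dt$.

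Finally I would carry out this elementary computation. Using $\int_{0}^{1/2}(1-t)^{\alpha }dt=\tfrac{1}{\alpha +1}\left( 1-2^{-(\alpha +1)}\right) $ together with $\int_{0}^{1/2}t^{\alpha }dt=\tfrac{1}{(\alpha +1)2^{\alpha +1}}$ gives $A=\tfrac{1}{\alpha +1}\left( 1-\tfrac{1}{2^{\alpha }}\right) $. Substituting $A=B$ back into the bound produces
\begin{equation*}
\frac{b-a}{2}\left( A\left\vert f^{\prime }(a)\right\vert +B\left\vert f^{\prime }(b)\right\vert \right) =\frac{b-a}{2(\alpha +1)}\left( 1-\frac{1}{2^{\alpha }}\right) \left[ \left\vert f^{\prime }(a)\right\vert +\left\vert f^{\prime }(b)\right\vert \right] ,
\end{equation*}
which is the asserted estimate. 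I note that the displayed statement should read with $\leq $ in place of $=$, since both the triangle inequality and the convexity bound are genuine inequalities; only the final integral evaluation is an equality.
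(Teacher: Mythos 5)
Your proposal is correct, including your closing observation that the displayed ``$=$'' must be a typo for ``$\leq$''. Note that the paper itself states this theorem without proof (it is quoted from Sar\i kaya et al.\ \cite{zeki2}); your argument---Lemma \ref{l}, the triangle inequality, convexity of $\left\vert f^{\prime }\right\vert$, and evaluation of the kernel integrals by splitting at $t=1/2$---is precisely the standard route taken in that cited source, with your symmetry observation under $t\mapsto 1-t$ (giving $A=B$) serving as a mild streamlining of the brute-force computation of the four elementary integrals.
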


In the present paper, we will establish several Hermite-Hadamard type
inequalities for the class of functions whose derivatives in absolute value
are $s$-logarithmically convex functions in the first and second sense via
Riemann-Liouville fractional integral.

\section{Hadamard type inequalities for $s$-logarithmically convex functions%
\protect\bigskip}

\begin{theorem}
\label{t}\bigskip Let $I\supset \left[ 0,\infty \right) $ be an open
interval and $f:I\rightarrow \left( 0,\infty \right) $ is differentiable. If 
$f^{\prime }\in L\left[ a,b\right] $ and $\left\vert f^{\prime }\right\vert $
is $s$-logarithmically convex functions in the second sense on $\left[ a,b%
\right] $ for some fixed $s\in \left( 0,1\right] $\textit{\ and }$\mu ,\eta
>0$ with $\mu +\eta =1,$ then the following inequality for fractional
integrals with $\alpha >0$ holds:%
\begin{eqnarray}
&&\left\vert \frac{f\left( a\right) +f\left( b\right) }{2}-\frac{\Gamma
\left( \alpha +1\right) }{2\left( b-a\right) ^{\alpha }}\left[
J_{b^{-}}^{\alpha }f\left( a\right) +J_{a^{+}}^{\alpha }f\left( b\right) %
\right] \right\vert  \label{x0} \\
&\leq &\frac{b-a}{2}\left\{ \int_{0}^{1/2}\mu \left[ \left( 1-t\right)
^{\alpha }-t^{\alpha }\right] ^{\frac{1}{\mu }}dt+\int_{1/2}^{1}\mu \left[
t^{\alpha }-\left( 1-t\right) ^{\alpha }\right] ^{\frac{1}{\mu }}dt\right. 
\notag \\
&&+\left. \eta \times \left\vert f^{\prime }\left( b\right) \right\vert ^{%
\frac{s}{\eta }}\psi \left( \frac{s}{\eta },\frac{s}{\eta }\right) \right\} 
\notag
\end{eqnarray}%
where%
\begin{equation}
\Psi \left( \psi \right) =\left\{ 
\begin{array}{c}
1,\text{ \ \ \ \ \ \ \ }\psi =1, \\ 
\frac{\psi -1}{\ln \psi },\text{ \ \ \ }0<\psi <1%
\end{array}%
\right. \text{ and }\psi \left( u,v\right) =\left\vert f^{\prime }\left(
a\right) \right\vert ^{u}\left\vert f^{\prime }\left( b\right) \right\vert
^{-v},\ \ u,v>0.  \label{2}
\end{equation}
\end{theorem}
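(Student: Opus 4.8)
The plan is to feed the identity of Lemma \ref{l} into a short chain of standard estimates: the triangle inequality, the second-sense $s$-logarithmic convexity of $|f'|$, and a weighted Young inequality. Starting from
\[
\frac{f(a)+f(b)}{2}-\frac{\Gamma(\alpha+1)}{2(b-a)^\alpha}\left[J_{b^-}^\alpha f(a)+J_{a^+}^\alpha f(b)\right]=\frac{b-a}{2}\int_0^1\left[(1-t)^\alpha-t^\alpha\right]f'\!\left(ta+(1-t)b\right)dt,
\]
I would take absolute values and pull the modulus inside the integral. Since $x\mapsto x^\alpha$ is increasing for $\alpha>0$, we have $(1-t)^\alpha-t^\alpha\ge 0$ exactly on $[0,1/2]$ and $\le 0$ on $[1/2,1]$, so I split the integral at $t=1/2$, writing $\left\vert(1-t)^\alpha-t^\alpha\right\vert$ as $(1-t)^\alpha-t^\alpha$ on the first piece and as $t^\alpha-(1-t)^\alpha$ on the second. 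This already accounts for the two ranges of integration in \eqref{x0}.

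Next I would invoke Definition \ref{mmm}: because $|f'|$ is $s$-logarithmically convex in the second sense, \eqref{m2} gives $\left\vert f'(ta+(1-t)b)\right\vert\le |f'(a)|^{t^s}|f'(b)|^{(1-t)^s}$. On each subinterval the integrand is then a product of a nonnegative fractional kernel $K(t)$ (namely $(1-t)^\alpha-t^\alpha$ or $t^\alpha-(1-t)^\alpha$) with the exponential factor $|f'(a)|^{t^s}|f'(b)|^{(1-t)^s}$. To decouple the kernel from the exponential factor I would apply the weighted Young inequality $xy\le \mu x^{1/\mu}+\eta y^{1/\eta}$, which is valid precisely because $\mu,\eta>0$ and $\mu+\eta=1$ (the conjugate exponents being $1/\mu$ and $1/\eta$). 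Taking $x=K(t)$ and $y=|f'(a)|^{t^s}|f'(b)|^{(1-t)^s}$ produces the term $\mu K(t)^{1/\mu}$, whose integrals over $[0,1/2]$ and $[1/2,1]$ are exactly the first two integrals on the right of \eqref{x0}, together with the exponential remainder $\eta\,|f'(a)|^{t^s/\eta}|f'(b)|^{(1-t)^s/\eta}$.

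It then remains to integrate this exponential remainder over $[0,1]$. The guiding identity is $\int_0^1 c^{\,t}\,dt=(c-1)/\ln c$, which is precisely the function $\Psi$ of \eqref{2}; after factoring out $|f'(b)|^{s/\eta}$ and writing the base as the ratio $\psi(s/\eta,s/\eta)=|f'(a)|^{s/\eta}|f'(b)|^{-s/\eta}$, this evaluation produces the closing term $\eta\,|f'(b)|^{s/\eta}\,\Psi\!\left(\psi(s/\eta,s/\eta)\right)$. Collecting the three contributions and multiplying by $(b-a)/2$ yields \eqref{x0}.

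The main obstacle I anticipate is the nonlinearity of the exponents $t^s$ and $(1-t)^s$: as written, $\int_0^1|f'(a)|^{t^s/\eta}|f'(b)|^{(1-t)^s/\eta}\,dt$ is not elementary, so the delicate step is to reduce these exponents to the linear form $t,\,1-t$ — exploiting $s\in(0,1]$ and the change of variable $t\mapsto 1-t$ to symmetrize the two halves — before the $\Psi$-evaluation can be applied. Keeping careful track of which factor carries $|f'(a)|$ versus $|f'(b)|$, and of the constraint $\mu+\eta=1$, is where the bookkeeping is heaviest; the two kernel integrals are simply left unevaluated, which is why they survive verbatim in the statement.
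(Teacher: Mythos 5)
Your chain of estimates follows the paper's own proof exactly up to the final step: Lemma \ref{l}, the triangle inequality, the split at $t=1/2$ according to the sign of $(1-t)^{\alpha }-t^{\alpha }$, the bound $\left\vert f^{\prime }\left( ta+(1-t)b\right) \right\vert \leq \left\vert f^{\prime }(a)\right\vert ^{t^{s}}\left\vert f^{\prime }(b)\right\vert ^{(1-t)^{s}}$ from Definition \ref{mmm}, and the weighted Young inequality $mn\leq \mu m^{1/\mu }+\eta n^{1/\eta }$ applied with $m$ the kernel and $n$ the exponential factor are precisely the moves made in the paper. But at the one point you yourself flag as ``the delicate step'' --- passing from $\int_{0}^{1}\left\vert f^{\prime }(a)\right\vert ^{t^{s}/\eta }\left\vert f^{\prime }(b)\right\vert ^{(1-t)^{s}/\eta }dt$ to something computable via $\int_{0}^{1}c^{t}dt=(c-1)/\ln c$ --- your proposal has a genuine gap. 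The tools you name, monotonicity in $s\in (0,1]$ and the substitution $t\mapsto 1-t$, cannot do this: the substitution merely exchanges the roles of $t^{s}$ and $(1-t)^{s}$, and no symmetrization turns the exponent $t^{s}$ into $st$.

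What the paper actually uses is the elementary inequality (\ref{xx}): if $0<\lambda \leq 1$ and $0<u,v\leq 1$, then $\lambda ^{u^{v}}\leq \lambda ^{uv}$ (since $u^{v}\geq uv$ and the base is at most one). Applied with $u=t$, $v=s$ and bases $\left\vert f^{\prime }(a)\right\vert $, $\left\vert f^{\prime }(b)\right\vert $ --- under the size proviso inserted mid-proof as ``when $\psi (u,v)\leq 1$'' --- it gives $\left\vert f^{\prime }(a)\right\vert ^{t^{s}/\eta }\left\vert f^{\prime }(b)\right\vert ^{(1-t)^{s}/\eta }\leq \left\vert f^{\prime }(a)\right\vert ^{st/\eta }\left\vert f^{\prime }(b)\right\vert ^{s(1-t)/\eta }$, and only then does the $\Psi $-evaluation (\ref{x11}) apply. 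This step is an inequality requiring a restriction on the size of $\left\vert f^{\prime }\right\vert $, not bookkeeping: take $\left\vert f^{\prime }(a)\right\vert =\left\vert f^{\prime }(b)\right\vert =e$ and $s=\eta =1/2$; then the integral you must bound equals $\int_{0}^{1}e^{2\left( \sqrt{t}+\sqrt{1-t}\right) }dt\geq e^{2}$ (since $\sqrt{t}+\sqrt{1-t}\geq 1$), whereas the target quantity $\left\vert f^{\prime }(b)\right\vert ^{s/\eta }\Psi \left( \psi \left( s/\eta ,s/\eta \right) \right) =e\cdot \Psi (1)=e$, so the inequality you need fails outright. Since your plan invokes neither the inequality (\ref{xx}) nor any hypothesis forcing the bases to lie in $(0,1]$, it cannot be completed as written; this missing lemma and its accompanying size condition are the actual content of the paper's final step.
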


\begin{proof}
\bigskip By Lemma \ref{l} and since $\left\vert f^{\prime }\right\vert $ is $%
s$-logarithmically convex functions in the second sense on $\left[ a,b\right]
,$ we have%
\begin{eqnarray}
&&\left\vert \frac{f\left( a\right) +f\left( b\right) }{2}-\frac{\Gamma
\left( \alpha +1\right) }{2\left( b-a\right) ^{\alpha }}\left[
J_{b^{-}}^{\alpha }f\left( a\right) +J_{a^{+}}^{\alpha }f\left( b\right) %
\right] \right\vert  \notag \\
&\leq &\frac{b-a}{2}\int_{0}^{1}\left\vert \left( 1-t\right) ^{\alpha
}-t^{\alpha }\right\vert \left\vert f^{\prime }\left( ta+\left( 1-t\right)
b\right) \right\vert dt  \notag \\
&\leq &\frac{b-a}{2}\int_{0}^{1}\left\vert \left( 1-t\right) ^{\alpha
}-t^{\alpha }\right\vert \left\vert f^{\prime }\left( a\right) \right\vert
^{t^{s}}\left\vert f^{\prime }\left( b\right) \right\vert ^{\left(
1-t\right) ^{s}}dt  \notag \\
&\leq &\frac{b-a}{2}\left\{ \int_{0}^{1/2}\left[ \left( 1-t\right) ^{\alpha
}-t^{\alpha }\right] \left\vert f^{\prime }\left( a\right) \right\vert
^{t^{s}}\left\vert f^{\prime }\left( b\right) \right\vert ^{\left(
1-t\right) ^{s}}dt\right.  \notag \\
&&+\left. \int_{1/2}^{1}\left[ t^{\alpha }-\left( 1-t\right) ^{\alpha }%
\right] \left\vert f^{\prime }\left( a\right) \right\vert ^{t^{s}}\left\vert
f^{\prime }\left( b\right) \right\vert ^{\left( 1-t\right) ^{s}}dt\right\} ,
\label{xf}
\end{eqnarray}%
for all $t\in \left[ 0,1\right] .$ Using the well known inequality $mn\leq
\mu m^{\frac{1}{\mu }}+\eta n^{\frac{1}{\eta }},$ on the right side of (\ref%
{xf}), we have%
\begin{eqnarray*}
&&\left\vert \frac{f\left( a\right) +f\left( b\right) }{2}-\frac{\Gamma
\left( \alpha +1\right) }{2\left( b-a\right) ^{\alpha }}\left[
J_{b^{-}}^{\alpha }f\left( a\right) +J_{a^{+}}^{\alpha }f\left( b\right) %
\right] \right\vert \\
&\leq &\frac{b-a}{2}\left\{ \int_{0}^{1/2}\mu \left[ \left( 1-t\right)
^{\alpha }-t^{\alpha }\right] ^{\frac{1}{\mu }}dt+\int_{0}^{1/2}\eta
\left\vert f^{\prime }\left( a\right) \right\vert ^{\frac{t^{s}}{\eta }%
}\left\vert f^{\prime }\left( b\right) \right\vert ^{\frac{\left( 1-t\right)
^{s}}{\eta }}dt\right. \\
&&+\left. \int_{1/2}^{1}\mu \left[ t^{\alpha }-\left( 1-t\right) ^{\alpha }%
\right] ^{\frac{1}{\mu }}dt+\int_{1/2}^{1}\eta \left\vert f^{\prime }\left(
a\right) \right\vert ^{\frac{t^{s}}{\eta }}\left\vert f^{\prime }\left(
b\right) \right\vert ^{\frac{\left( 1-t\right) ^{s}}{\eta }}dt\right\} \\
&=&\frac{b-a}{2}\left\{ \int_{0}^{1/2}\mu \left[ \left( 1-t\right) ^{\alpha
}-t^{\alpha }\right] ^{\frac{1}{\mu }}dt+\int_{1/2}^{1}\mu \left[ t^{\alpha
}-\left( 1-t\right) ^{\alpha }\right] ^{\frac{1}{\mu }}dt\right. \\
&&+\left. \eta \int_{0}^{1}\left\vert f^{\prime }\left( a\right) \right\vert
^{\frac{t^{s}}{\eta }}\left\vert f^{\prime }\left( b\right) \right\vert ^{%
\frac{\left( 1-t\right) ^{s}}{\eta }}dt\right\}
\end{eqnarray*}%
If $0<\lambda \leq 1,$ $0<u,v\leq 1,$ then%
\begin{equation}
\lambda ^{u^{v}}\leq \lambda ^{uv}.  \label{xx}
\end{equation}%
When $\psi \left( u,v\right) \leq 1,$ by (\ref{xx}), we get that%
\begin{equation}
\int_{0}^{1}\left\vert f^{\prime }\left( a\right) \right\vert ^{\frac{t^{s}}{%
\eta }}\left\vert f^{\prime }\left( b\right) \right\vert ^{\frac{\left(
1-t\right) ^{s}}{\eta }}dt\leq \int_{0}^{1}\left\vert f^{\prime }\left(
a\right) \right\vert ^{\frac{st}{\eta }}\left\vert f^{\prime }\left(
b\right) \right\vert ^{\frac{s\left( 1-t\right) }{\eta }}dt=\left\vert
f^{\prime }\left( b\right) \right\vert ^{\frac{s}{\eta }}\psi \left( \frac{s%
}{\eta },\frac{s}{\eta }\right) .  \label{x11}
\end{equation}%
From (\ref{xf}) to (\ref{x11}), (\ref{x0}) holds.
\end{proof}

\begin{remark}
\bigskip If we take $\alpha =1,$ in Theorem \ref{t}, then the inequality (%
\ref{x0}) become the inequality%
\begin{equation*}
\left\vert \frac{f\left( a\right) +f\left( b\right) }{2}-\frac{1}{b-a}%
\int_{a}^{b}f\left( x\right) dx\right\vert \leq \frac{b-a}{2}\left[ \frac{%
\mu ^{2}}{\mu +1}+\eta \times \left\vert f^{\prime }\left( b\right)
\right\vert ^{\frac{s}{\eta }}\psi \left( \frac{s}{\eta },\frac{s}{\eta }%
\right) \right] .
\end{equation*}
\end{remark}

The corresponding version for powers of the absolute value of the first
derivative is incorporated in the following result:

\begin{theorem}
\bigskip \label{tt} Let $I\supset \left[ 0,\infty \right) $ be an open
interval and $f:I\rightarrow \left( 0,\infty \right) $ is differentiable. If 
$f^{\prime }\in L\left[ a,b\right] $ and $\left\vert f^{\prime }\right\vert $
is $s$-logarithmically convex functions in the second sense on $\left[ a,b%
\right] $ for some fixed $s\in \left( 0,1\right] $\textit{\ and }$\mu ,\eta
>0$ with $\mu +\eta =1$ and $p,q>1,$ then the following inequality for
fractional integrals with $\alpha >0$ holds:%
\begin{eqnarray}
&&\left\vert \frac{f\left( a\right) +f\left( b\right) }{2}-\frac{\Gamma
\left( \alpha +1\right) }{2\left( b-a\right) ^{\alpha }}\left[
J_{b^{-}}^{\alpha }f\left( a\right) +J_{a^{+}}^{\alpha }f\left( b\right) %
\right] \right\vert  \label{89} \\
&\leq &\frac{b-a}{2\left( \alpha p+1\right) ^{\frac{1}{p}}}\left\vert
f^{\prime }\left( b\right) \right\vert ^{s}\left( \psi \left( sq,sq\right)
\right) ^{\frac{1}{q}}  \notag
\end{eqnarray}%
where $1/p+1/q=1,$ and $\psi \left( u,v\right) $ is defined as in (\ref{2}).
\end{theorem}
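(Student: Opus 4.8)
The plan is to reproduce the opening of the proof of Theorem \ref{t} verbatim and then to replace the Young-type splitting inequality by H\"older's inequality. First I would apply Lemma \ref{l}, pass to absolute values, and postpone the use of $s$-logarithmic convexity, recording the purely analytic bound
\begin{equation*}
\left\vert \frac{f(a)+f(b)}{2}-\frac{\Gamma(\alpha+1)}{2(b-a)^{\alpha}}\left[J_{b^{-}}^{\alpha}f(a)+J_{a^{+}}^{\alpha}f(b)\right]\right\vert \leq \frac{b-a}{2}\int_{0}^{1}\left\vert (1-t)^{\alpha}-t^{\alpha}\right\vert \left\vert f^{\prime}(ta+(1-t)b)\right\vert dt ,
\end{equation*}
which is the first line of the previous proof. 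Then I would apply H\"older's inequality with exponents $p$ and $q$, $1/p+1/q=1$, to this single integral, splitting the integrand as $\left\vert (1-t)^{\alpha}-t^{\alpha}\right\vert \cdot \left\vert f^{\prime}(ta+(1-t)b)\right\vert$; this factors the right-hand side into $\left(\int_{0}^{1}\left\vert (1-t)^{\alpha}-t^{\alpha}\right\vert^{p}dt\right)^{1/p}$ times $\left(\int_{0}^{1}\left\vert f^{\prime}(ta+(1-t)b)\right\vert^{q}dt\right)^{1/q}$.

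For the second factor I would transcribe the computation already carried out in the proof of Theorem \ref{t}. By $s$-logarithmic convexity in the second sense, $\left\vert f^{\prime}(ta+(1-t)b)\right\vert^{q}\leq \left\vert f^{\prime}(a)\right\vert^{qt^{s}}\left\vert f^{\prime}(b)\right\vert^{q(1-t)^{s}}$, and then inequality (\ref{xx}) (applied, as in the previous proof, in the regime $\psi(u,v)\leq 1$) linearises the exponents, replacing $t^{s}$ by $ts$ and $(1-t)^{s}$ by $(1-t)s$. Exactly as in (\ref{x11}) this reduces the inner integral to an elementary exponential integral,
\begin{equation*}
\int_{0}^{1}\left\vert f^{\prime}(a)\right\vert^{qst}\left\vert f^{\prime}(b)\right\vert^{qs(1-t)}dt = \left\vert f^{\prime}(b)\right\vert^{qs}\int_{0}^{1}\left(\frac{\left\vert f^{\prime}(a)\right\vert}{\left\vert f^{\prime}(b)\right\vert}\right)^{qst}dt = \left\vert f^{\prime}(b)\right\vert^{qs}\psi(sq,sq) ,
\end{equation*}
using the definition of $\psi$ in (\ref{2}); taking the $q$-th root gives $\left\vert f^{\prime}(b)\right\vert^{s}\left(\psi(sq,sq)\right)^{1/q}$.

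The step I expect to be the real obstacle is the first factor, the power integral $\int_{0}^{1}\left\vert (1-t)^{\alpha}-t^{\alpha}\right\vert^{p}dt$, whose exact value is awkward: splitting at $t=1/2$ and exploiting the symmetry $t\mapsto 1-t$ yields the sharper but unwieldy value $\frac{2-2^{-\alpha p}}{\alpha p+1}$. The clean constant $1/(\alpha p+1)$ demanded by (\ref{89}) instead comes from the dominating estimate $\left\vert (1-t)^{\alpha}-t^{\alpha}\right\vert\leq (1-t)^{\alpha}$ together with $\int_{0}^{1}(1-t)^{\alpha p}dt=1/(\alpha p+1)$, so that $\left(\int_{0}^{1}\left\vert (1-t)^{\alpha}-t^{\alpha}\right\vert^{p}dt\right)^{1/p}\leq (\alpha p+1)^{-1/p}$. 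Multiplying this by the second factor and by the prefactor $\frac{b-a}{2}$ then yields precisely (\ref{89}), completing the argument; the second factor is a routine repetition of the earlier computation, so the only genuine subtlety lies in pinning down the first integral at the stated constant.
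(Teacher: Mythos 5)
Your overall architecture is the same as the paper's: apply Lemma \ref{l}, use H\"{o}lder's inequality with exponents $p,q$ to split off $\left( \int_{0}^{1}\left\vert \left( 1-t\right) ^{\alpha }-t^{\alpha }\right\vert ^{p}dt\right) ^{1/p}$, and handle the second factor via $s$-logarithmic convexity together with the reduction (\ref{xx}) from $t^{s}$ to $ts$, arriving at $\left\vert f^{\prime }\left( b\right) \right\vert ^{sq}\psi \left( sq,sq\right) $ exactly as in (\ref{92}) (whether you apply convexity before or after H\"{o}lder is immaterial). So the second factor is fine, and you correctly identified the first factor as the crux.

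However, your treatment of that crux is wrong. The pointwise bound $\left\vert \left( 1-t\right) ^{\alpha }-t^{\alpha }\right\vert \leq \left( 1-t\right) ^{\alpha }$ fails on $\left( 1/2,1\right] $: at $t=1$ the left-hand side equals $1$ while the right-hand side equals $0$, and in general for $t>1/2$ it would require $t^{\alpha }\leq 2\left( 1-t\right) ^{\alpha }$, which is violated as $t\rightarrow 1$ for every $\alpha >0$. Nor can this be rescued by symmetry: writing $\int_{0}^{1}\left\vert \left( 1-t\right) ^{\alpha }-t^{\alpha }\right\vert ^{p}dt=2\int_{0}^{1/2}\left[ \left( 1-t\right) ^{\alpha }-t^{\alpha }\right] ^{p}dt\leq 2\int_{0}^{1/2}\left( 1-t\right) ^{\alpha p}dt=\frac{2\left( 1-2^{-\left( \alpha p+1\right) }\right) }{\alpha p+1}$ overshoots, since $2\left( 1-2^{-\left( \alpha p+1\right) }\right) >1$ whenever $\alpha p>0$; so your dominating estimate cannot yield the constant $1/\left( \alpha p+1\right) $ demanded by (\ref{89}). (Your side claim that the integral has the ``exact value'' $\frac{2-2^{-\alpha p}}{\alpha p+1}$ is also incorrect; it has no such elementary closed form for general $p$.) The step the paper actually uses, and which your proof is missing, is the subadditivity inequality $\left\vert t_{1}^{\alpha }-t_{2}^{\alpha }\right\vert \leq \left\vert t_{1}-t_{2}\right\vert ^{\alpha }$ for $\alpha ,t_{1},t_{2}\in \left[ 0,1\right] $, which gives $\left\vert \left( 1-t\right) ^{\alpha }-t^{\alpha }\right\vert \leq \left\vert 1-2t\right\vert ^{\alpha }$ and hence $\int_{0}^{1}\left\vert \left( 1-t\right) ^{\alpha }-t^{\alpha }\right\vert ^{p}dt\leq \int_{0}^{1}\left\vert 1-2t\right\vert ^{\alpha p}dt=\frac{1}{\alpha p+1}$, as in (\ref{91}). (Note that this inequality is only valid for $\alpha \leq 1$, a restriction the paper itself glosses over while asserting the theorem for all $\alpha >0$; but in any case it, and not your pointwise domination, is the route to the stated constant.)
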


\begin{proof}
\bigskip \bigskip By Lemma \ref{l} and since $\left\vert f^{\prime
}\right\vert $ is $s$-logarithmically convex functions in the second sense
on $\left[ a,b\right] ,$ we have%
\begin{eqnarray}
&&\left\vert \frac{f\left( a\right) +f\left( b\right) }{2}-\frac{\Gamma
\left( \alpha +1\right) }{2\left( b-a\right) ^{\alpha }}\left[
J_{b^{-}}^{\alpha }f\left( a\right) +J_{a^{+}}^{\alpha }f\left( b\right) %
\right] \right\vert   \label{nn} \\
&\leq &\frac{b-a}{2}\int_{0}^{1}\left\vert \left( 1-t\right) ^{\alpha
}-t^{\alpha }\right\vert \left\vert f^{\prime }\left( a\right) \right\vert
^{t^{s}}\left\vert f^{\prime }\left( b\right) \right\vert ^{\left(
1-t\right) ^{s}}dt  \notag
\end{eqnarray}%
for all $t\in \left[ 0,1\right] .$ Using the well known H\"{o}lder
inequality, on the right side of (\ref{nn}) and making the change of
variable we have%
\begin{eqnarray}
&&\left\vert \frac{f\left( a\right) +f\left( b\right) }{2}-\frac{\Gamma
\left( \alpha +1\right) }{2\left( b-a\right) ^{\alpha }}\left[
J_{b^{-}}^{\alpha }f\left( a\right) +J_{a^{+}}^{\alpha }f\left( b\right) %
\right] \right\vert   \label{90} \\
&\leq &\frac{b-a}{2}\left( \int_{0}^{1}\left\vert \left( 1-t\right) ^{\alpha
}-t^{\alpha }\right\vert ^{p}dt\right) ^{\frac{1}{p}}\left(
\int_{0}^{1}\left\vert f^{\prime }\left( a\right) \right\vert
^{qt^{s}}\left\vert f^{\prime }\left( b\right) \right\vert ^{q\left(
1-t\right) ^{s}}dt\right) ^{\frac{1}{q}}.  \notag
\end{eqnarray}%
It is know that for $\alpha ,t_{1},t_{2}\in \left[ 0,1\right] ,$%
\begin{equation*}
\left\vert t_{1}^{\alpha }-t_{2}^{\alpha }\right\vert \leq \left\vert
t_{1}-t_{2}\right\vert ^{\alpha },
\end{equation*}%
therefore%
\begin{equation}
\int_{0}^{1}\left\vert \left( 1-t\right) ^{\alpha }-t^{\alpha }\right\vert
^{p}dt\leq \int_{0}^{1}\left\vert 1-2t\right\vert ^{\alpha p}dt=\frac{1}{%
\alpha p+1}.  \label{91}
\end{equation}%
Since $\left\vert f^{\prime }\right\vert $ is $s$-logarithmically convex
functions on $\left[ a,b\right] $ and $\psi \left( u,v\right) \leq 1,$ we
obtain%
\begin{equation}
\int_{0}^{1}\left\vert f^{\prime }\left( a\right) \right\vert
^{qt^{s}}\left\vert f^{\prime }\left( b\right) \right\vert ^{q\left(
1-t\right) ^{s}}dt\leq \left\vert f^{\prime }\left( b\right) \right\vert
^{sq}\psi \left( sq,sq\right)   \label{92}
\end{equation}%
From (\ref{90}) to (\ref{92}), (\ref{89}) holds.
\end{proof}

A different approach leads to the following result.

\begin{theorem}
\bigskip \label{ttt}Let $I\supset \left[ 0,\infty \right) $ be an open
interval and $f:I\rightarrow \left( 0,\infty \right) $ is differentiable. If 
$f^{\prime }\in L\left[ a,b\right] $ and $\left\vert f^{\prime }\right\vert
^{q}$ is $s$-logarithmically convex functions in the second sense on $\left[
a,b\right] $ for some fixed $s\in \left( 0,1\right] $\textit{\ and }$\mu
,\eta >0$ with $\mu +\eta =1$ and $q\geq 1,$ then the following inequality
for fractional integrals with $\alpha >0$ holds:%
\begin{eqnarray}
&&\left\vert \frac{f\left( a\right) +f\left( b\right) }{2}-\frac{\Gamma
\left( \alpha +1\right) }{2\left( b-a\right) ^{\alpha }}\left[
J_{b^{-}}^{\alpha }f\left( a\right) +J_{a^{+}}^{\alpha }f\left( b\right) %
\right] \right\vert  \label{50} \\
&\leq &\frac{b-a}{2^{\frac{q-\left( 1-\alpha \right) \left( q-1\right) }{q}}}%
\left( \frac{2^{\alpha }-1}{\alpha +1}\right) ^{1-\frac{1}{q}}\left( \frac{%
\mu ^{2}}{\alpha +\mu }+\eta \left\vert f^{\prime }\left( b\right)
\right\vert ^{\frac{sq}{\eta }}\psi \left( \frac{sq}{\eta },\frac{sq}{\eta }%
\right) \right) ^{\frac{1}{q}}  \notag
\end{eqnarray}%
where $\psi \left( u,v\right) $ is defined as in (\ref{2}).
\end{theorem}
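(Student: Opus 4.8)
The plan is to follow the same template as Theorems \ref{t} and \ref{tt}, but now with the power-mean (weighted Hölder) inequality as the organizing device, since the hypothesis has become $q\geq 1$ on $\left\vert f^{\prime }\right\vert ^{q}$. First I would invoke Lemma \ref{l} and pass to absolute values, so that the left-hand side is dominated by $\frac{b-a}{2}\int_{0}^{1}\left\vert \left( 1-t\right) ^{\alpha }-t^{\alpha }\right\vert \left\vert f^{\prime }\left( ta+\left( 1-t\right) b\right) \right\vert dt$. To this single integral I would apply the power-mean inequality with weight $g(t)=\left\vert \left( 1-t\right) ^{\alpha }-t^{\alpha }\right\vert$, writing $g=g^{1-1/q}\cdot g^{1/q}$ and using Hölder with exponents $q/(q-1)$ and $q$; this produces the factorization $\bigl(\int_{0}^{1}g\,dt\bigr)^{1-1/q}\bigl(\int_{0}^{1}g\,\left\vert f^{\prime }\left( ta+\left( 1-t\right) b\right) \right\vert ^{q}dt\bigr)^{1/q}$.

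The first factor I would evaluate exactly. Splitting $\left[ 0,1\right]$ at $t=1/2$, where the sign of $\left( 1-t\right) ^{\alpha }-t^{\alpha }$ changes, and integrating the two power terms gives $\int_{0}^{1}\left\vert \left( 1-t\right) ^{\alpha }-t^{\alpha }\right\vert dt=\frac{2\left( 2^{\alpha }-1\right) }{\left( \alpha +1\right) 2^{\alpha }}$. Raising this to $1-1/q$ and absorbing the factor $2^{\left( 1-\alpha \right) \left( 1-1/q\right) }$ into the leading $\frac{b-a}{2}$ is exactly what yields the prefactor $\frac{b-a}{2^{\left[ q-\left( 1-\alpha \right) \left( q-1\right) \right] /q}}\bigl(\frac{2^{\alpha }-1}{\alpha +1}\bigr)^{1-1/q}$; verifying the arithmetic of the exponent of $2$ is the one piece of bookkeeping that must be carried out with care.

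For the second factor I would bound $\left\vert f^{\prime }\left( ta+\left( 1-t\right) b\right) \right\vert ^{q}\leq \left\vert f^{\prime }\left( a\right) \right\vert ^{qt^{s}}\left\vert f^{\prime }\left( b\right) \right\vert ^{q\left( 1-t\right) ^{s}}$ using the $s$-logarithmic convexity of $\left\vert f^{\prime }\right\vert ^{q}$, and then apply Young's inequality $mn\leq \mu m^{1/\mu }+\eta n^{1/\eta }$ with $m=\left\vert \left( 1-t\right) ^{\alpha }-t^{\alpha }\right\vert$ and $n=\left\vert f^{\prime }\left( a\right) \right\vert ^{qt^{s}}\left\vert f^{\prime }\left( b\right) \right\vert ^{q\left( 1-t\right) ^{s}}$, splitting the integral into a $\mu$-part and an $\eta$-part. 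In the $\mu$-part I would use the elementary estimate $\left\vert \left( 1-t\right) ^{\alpha }-t^{\alpha }\right\vert \leq \left\vert 1-2t\right\vert ^{\alpha }$ already exploited in (\ref{91}), so that $\mu \int_{0}^{1}\left\vert \left( 1-t\right) ^{\alpha }-t^{\alpha }\right\vert ^{1/\mu }dt\leq \mu \int_{0}^{1}\left\vert 1-2t\right\vert ^{\alpha /\mu }dt=\frac{\mu ^{2}}{\alpha +\mu }$; note that, unlike the first factor, this one has no closed form and must genuinely be estimated. In the $\eta$-part I would apply inequality (\ref{xx}) to replace $t^{s},\left( 1-t\right) ^{s}$ by $st,s\left( 1-t\right)$ in the exponents (legitimate because $\psi \leq 1$) and evaluate exactly as in (\ref{x11}), obtaining $\eta \left\vert f^{\prime }\left( b\right) \right\vert ^{sq/\eta }\psi \left( \frac{sq}{\eta },\frac{sq}{\eta }\right)$. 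Collecting the two parts inside the $1/q$-power and multiplying by the prefactor gives (\ref{50}).

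The main obstacle is not any individual inequality but the coordinated, asymmetric treatment of the weight integral: it must be computed exactly when raised to the first power, to recover the sharp constant in the prefactor, yet only estimated via $\left\vert 1-2t\right\vert ^{\alpha }$ when raised to the power $1/\mu$, where no elementary antiderivative is available. Keeping these two computations consistent, and tracking the powers of $2$ through the power-mean split so that the exponent $\left[ q-\left( 1-\alpha \right) \left( q-1\right) \right] /q$ emerges cleanly, is where the real care lies.
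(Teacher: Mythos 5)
Your proposal is correct and follows essentially the same route as the paper's own proof: Lemma \ref{l}, the power-mean split into $\left( \int_{0}^{1}\left\vert \left( 1-t\right) ^{\alpha }-t^{\alpha }\right\vert dt\right) ^{1-1/q}$ times the weighted $q$-th moment, exact evaluation of the first factor as $\frac{2}{\alpha +1}\left( 1-\frac{1}{2^{\alpha }}\right) $, then $s$-logarithmic convexity of $\left\vert f^{\prime }\right\vert ^{q}$, Young's inequality with weights $\mu ,\eta $, the bound $\left\vert \left( 1-t\right) ^{\alpha }-t^{\alpha }\right\vert \leq \left\vert 1-2t\right\vert ^{\alpha }$ giving $\mu ^{2}/\left( \alpha +\mu \right) $, and inequality (\ref{xx}) for the $\eta $-part, with the same power-of-$2$ bookkeeping producing the prefactor. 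The only (immaterial) difference is the order of two steps: the paper replaces $\left\vert \left( 1-t\right) ^{\alpha }-t^{\alpha }\right\vert $ by $\left\vert 1-2t\right\vert ^{\alpha }$ \emph{before} applying Young's inequality, whereas you apply Young first and then estimate the $\mu $-integral; both orderings are valid and yield the identical constant.
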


\begin{proof}
\bigskip \bigskip By Lemma \ref{l} and using the well known power mean
inequality, we have%
\begin{eqnarray*}
&&\left\vert \frac{f\left( a\right) +f\left( b\right) }{2}-\frac{\Gamma
\left( \alpha +1\right) }{2\left( b-a\right) ^{\alpha }}\left[
J_{b^{-}}^{\alpha }f\left( a\right) +J_{a^{+}}^{\alpha }f\left( b\right) %
\right] \right\vert \\
&\leq &\frac{b-a}{2}\int_{0}^{1}\left\vert \left( 1-t\right) ^{\alpha
}-t^{\alpha }\right\vert \left\vert f^{\prime }\left( ta+\left( 1-t\right)
b\right) \right\vert dt \\
&\leq &\frac{b-a}{2}\left( \int_{0}^{1}\left\vert \left( 1-t\right) ^{\alpha
}-t^{\alpha }\right\vert dt\right) ^{1-\frac{1}{q}}\left(
\int_{0}^{1}\left\vert \left( 1-t\right) ^{\alpha }-t^{\alpha }\right\vert
\left\vert f^{\prime }\left( ta+\left( 1-t\right) b\right) \right\vert
^{q}dt\right) ^{\frac{1}{q}}
\end{eqnarray*}%
It is easily check that 
\begin{equation*}
\int_{0}^{1}\left\vert \left( 1-t\right) ^{\alpha }-t^{\alpha }\right\vert
dt=\frac{2}{\alpha +1}\left( 1-\frac{1}{2^{\alpha }}\right) .
\end{equation*}%
Since $\left\vert f^{\prime }\right\vert ^{q}$ is $s$-logarithmically convex
and using the well known inequality $mn\leq \mu m^{\frac{1}{\mu }}+\eta n^{%
\frac{1}{\eta }}$, we obtain 
\begin{eqnarray*}
\int_{0}^{1}\left\vert \left( 1-t\right) ^{\alpha }-t^{\alpha }\right\vert
\left\vert f^{\prime }\left( ta+\left( 1-t\right) b\right) \right\vert
^{q}dt &\leq &\int_{0}^{1}\left\vert \left( 1-t\right) ^{\alpha }-t^{\alpha
}\right\vert \left\vert f^{\prime }\left( a\right) \right\vert
^{qt^{s}}\left\vert f^{\prime }\left( b\right) \right\vert ^{q\left(
1-t\right) ^{s}}dt \\
&\leq &\int_{0}^{1}\left\vert 1-2t\right\vert ^{\alpha }\left\vert f^{\prime
}\left( a\right) \right\vert ^{qt^{s}}\left\vert f^{\prime }\left( b\right)
\right\vert ^{q\left( 1-t\right) ^{s}}dt \\
&\leq &\mu \int_{0}^{1}\left\vert 1-2t\right\vert ^{\frac{\alpha }{\mu }%
}dt+\eta \int_{0}^{1}\left\vert f^{\prime }\left( a\right) \right\vert ^{%
\frac{qt^{s}}{\eta }}\left\vert f^{\prime }\left( b\right) \right\vert ^{%
\frac{q\left( 1-t\right) ^{s}}{\eta }}dt.
\end{eqnarray*}%
It is easily check that%
\begin{equation*}
\mu \int_{0}^{1}\left\vert 1-2t\right\vert ^{\frac{\alpha }{\mu }}dt=\mu 
\frac{1}{\frac{\alpha }{\mu }+1}=\frac{\mu ^{2}}{\alpha +\mu }.
\end{equation*}%
Afterwards, when $\psi \left( u,v\right) \leq 1,$ by (\ref{xx}), we get that%
\begin{equation}
\int_{0}^{1}\left\vert f^{\prime }\left( a\right) \right\vert ^{\frac{qt^{s}%
}{\eta }}\left\vert f^{\prime }\left( b\right) \right\vert ^{\frac{q\left(
1-t\right) ^{s}}{\eta }}dt\leq \int_{0}^{1}\left\vert f^{\prime }\left(
a\right) \right\vert ^{\frac{sqt}{\eta }}\left\vert f^{\prime }\left(
b\right) \right\vert ^{\frac{sq\left( 1-t\right) }{\eta }}dt=\left\vert
f^{\prime }\left( b\right) \right\vert ^{\frac{sq}{\eta }}\psi \left( \frac{%
sq}{\eta },\frac{sq}{\eta }\right) .
\end{equation}%
Therefore%
\begin{eqnarray*}
&&\left\vert \frac{f\left( a\right) +f\left( b\right) }{2}-\frac{\Gamma
\left( \alpha +1\right) }{2\left( b-a\right) ^{\alpha }}\left[
J_{b^{-}}^{\alpha }f\left( a\right) +J_{a^{+}}^{\alpha }f\left( b\right) %
\right] \right\vert \\
&\leq &\frac{b-a}{2}\left( \frac{2}{\alpha +1}\left( 1-\frac{1}{2^{\alpha }}%
\right) \right) ^{1-\frac{1}{q}}\left( \mu \int_{0}^{1}\left\vert
1-2t\right\vert ^{\frac{\alpha }{\mu }}dt+\eta \int_{0}^{1}\left\vert
f^{\prime }\left( a\right) \right\vert ^{\frac{qt^{s}}{\eta }}\left\vert
f^{\prime }\left( b\right) \right\vert ^{\frac{q\left( 1-t\right) ^{s}}{\eta 
}}dt\right) ^{\frac{1}{q}} \\
&\leq &\frac{b-a}{2}\left( \frac{2}{\alpha +1}\left( 1-\frac{1}{2^{\alpha }}%
\right) \right) ^{1-\frac{1}{q}}\left( \frac{\mu ^{2}}{\alpha +\mu }+\eta
\left\vert f^{\prime }\left( b\right) \right\vert ^{\frac{sq}{\eta }}\psi
\left( \frac{sq}{\eta },\frac{sq}{\eta }\right) \right) ^{\frac{1}{q}}
\end{eqnarray*}%
which comletes the proof.
\end{proof}

\bigskip


\begin{thebibliography}{99}
\bibitem{hdm} J. Hadamard: \'{E}tude sur les propri\'{e}t\'{e}s des
fonctions enti\`{e}res et en particulier d'une fonction consider\'{e}e par
Riemann, J. Math Pures Appl., 58, (1893) 171--215.

\bibitem{at} A. O. Akdemir and M. Tun\c{c}: On some integral inequalities
for $s$-logarithmically convex functions, submitted.

\bibitem{zeki2} Sar\i kaya, M. Z., Set, E., Yaldiz, H. and Ba\c{s}ak, N.:
Hermite-Hadamard's inequalities for fractional integrals and related
fractional inequalities, Mathematical and Computer Modelling, In Press.

\bibitem{anastas} Belarbi, S. and Dahmani, Z.: On some new fractional
integral inequalities, J. Ineq. Pure and Appl. Math., 10(3), Art. 86 (2009).

\bibitem{dahmani} Dahmani, Z.: New inequalities in fractional integrals,
International Journal of Nonlinear Science, 9(4), 493-497 (2010).

\bibitem{zdahm} Dahmani, Z.: On Minkowski and Hermite-Hadamard integral
inequalities via fractional integration, Ann. Funct. Anal. 1(1), 51-58
(2010).

\bibitem{zdah} Dahmani, Z., Tabharit, L. and Taf, S.: Some fractional
integral inequalities, Nonl. Sci. Lett. A., 1(2), 155-160 (2010).

\bibitem{dahtab} Dahmani, Z., Tabharit, L. and Taf, S.: New generalizations
of Gr\"{u}ss inequality using Riemann-Liouville fractional integrals, Bull.
Math. Anal. Appl., 2(3), 93-99 (2010).

\bibitem{a} \"{O}zdemir, M.E., Kavurmac\i , H. and Avc\i , M.: New
inequalities of Ostrowski type for mappings whose derivatives are $(\alpha
,m)$-convex via fractional integrals, RGMIA Research Report Collection, 15,
Article 10, 8 pp (2012).

\bibitem{a1} \"{O}zdemir, M.E., Kavurmac\i , H. and Y\i ld\i z, \c{C}.:
Fractional integral inequalities via $s$-convex functions, arXiv:1201.4915v1
[math.CA] 24 Jan 2012.

\bibitem{ody} \"{O}zdemir, M.E., Dragomir, S. S. and Y\i ld\i z, \c{C}.: The
Hadamard inequality for convex function via fractional integrals, RGMIA
Volume 15, art.14, 2012.
\end{thebibliography}
\end{document}